\newtheorem{definition}{Definition}
\newtheorem{theorem}{Theorem}
\newtheorem{corollary}{Corollary}
\newtheorem{proposition}{Proposition}
\newtheorem{example}{Example}
\begin{document}

\title{The Complete Positivity of Symmetric Tridiagonal and Pentadiagonal  Matrices}
\author[Lei Cao]{Lei Cao \textsuperscript{1,2}}
\author[Darian McLaren]{Darian McLaren \textsuperscript{3}}
\author[Sarah Plosker]{Sarah Plosker \textsuperscript{3}}

\thanks{\textsuperscript{1}School of Mathematics and Statistics, Shandong Normal University, Shandong, 250358, China}
\thanks{\textsuperscript{2}Department of Mathematics, Halmos College, Nova Southeastern University, FL 33314, USA}
\thanks{\textsuperscript{3}Department of Mathematics and Computer Science, Brandon University,
Brandon, MB R7A 6A9, Canada}

\keywords{tridiagonal matrix, 
pentadiagonal matrix, completely positive matrix, 
positive semidefinite matrix, doubly stochastic matrix}
\subjclass[2010]{
05C38,   	
 05C50,   	
15B51,   
15B57   	
 }


\maketitle

  \begin{abstract}
{We provide a decomposition that is sufficient in showing when a symmetric tridiagonal matrix $A$ is completely positive. Our decomposition can be applied to a wide range of matrices. We give alternate proofs for  a number of related results found in the literature in a simple, straightforward manner. We show that the cp-rank of any completely positive irreducible tridiagonal doubly stochastic matrix is equal to its rank. We then consider symmetric pentadiagonal matrices, proving some analogous results, and providing two different decompositions sufficient for complete positivity. We illustrate our  constructions with  a number of examples.}
\end{abstract}

\maketitle

\section{Preliminaries}

All matrices herein are real-valued, and in particular are entrywise non-negative. Let $A$ be an $n\times n$ symmetric tridiagonal matrix:
$$A=\begin{pmatrix}a_1&b_1&&&& \\ b_1 & a_2 & b_2 &&&\\ &\ddots&\ddots&\ddots&&& \\& &\ddots&\ddots&\ddots&& \\ &&&b_{n-3}&a_{n-2}&b_{n-2}& \\&&&&b_{n-2}&a_{n-1}&b_{n-1} \\&&&&&b_{n-1}&a_n \end{pmatrix}.$$
We are often interested in  the case where $A$ is also doubly stochastic, in which case we have
$a_{i}=1-b_{i-1}-b_{i}$ for all $i=1, 2,\ldots,n$, with the convention that $b_0=b_n=0$. It is easy to see that if a tridiagonal matrix is doubly stochastic, it must be symmetric, so the additional hypothesis of symmetry can be dropped in that case.

We are interested in positivity conditions for symmetric tridiagonal and pentadiagonal matrices. A stronger condition than positive semidefiniteness, known as complete positivity, has applications in a variety of areas of study, including block designs, maximin efficiency-robust tests,  modelling DNA evolution, and more
\cite[Chapter 2]{CP}, as well as recent use in mathematical optimization  and quantum information theory (see \cite{Nathaniel} and the references therein).

 With this motivation in mind, we study the positivity (in various forms) of  symmetric tridiagonal and pentadiagonal matrices, where we highlight the important case when the matrix is also doubly stochastic.
 Although it is NP-hard to determine if a given matrix is completely positive \cite{NPhard}, in Section~\ref{sec:td_CP} we provide a construction that is sufficient to show that a given symmetric tridiagonal 
 matrix is completely positive. We provide a number of examples illustrating the utility of this construction.
The literature on completely positive matrices often considers the cp-rank, or the factorization index, of a completely positive matrix,   which is the  minimal number of rank-one matrices in the   decomposition showing complete positivity;  e.g. Chapter 3 of \cite{CP} is devoted to this topic. We show that for irreducible tridiagonal doubly stochastic matrices, our decomposition is minimal.  It should be noted that it is known that  acyclic doubly non-negative matrices are completely
positive \cite{BH}, and this result has been generalized to bipartite doubly non-negative matrices \cite{BG}.   Our Proposition~\ref{PD_CP}  is an independent discovery of a special case of this result, using a simpler method of proof.

  As a natural extension of the tridiagonal case, we generalize many of our results to symmetric  pentadiagonal  matrices in Section~\ref{sec:penta}. While a construction analogous to that for the tridiagonal setting works in the pentadiagonal setting, we also  provide an alternate, more involved, construction that works in many cases when the original construction does not. A characterization used to determine the complete positivity of a matrix
with a particular graph is given in \cite{dogears}, with respect to the complete positivity of smaller matrices; they consider a particular non-crossing
cycle, which is the graph of a pentadiagonal matrix.

\section{Tridiagonal  matrices}\label{sec:td}
\subsection{Basic Properties of tridiagonal doubly stochastic matrices}\label{sec:td_basic}

Tridiagonal doubly stochastic matrices arise in the literature in a number of areas, in particular with respect to the study of Markov chains and in majorization theory. The facial structure of the set of all tridiagonal doubly stochastic matrices,  which is a subpolytope of the Birkhoff polytope  of $n\times n$ doubly stochastic matrices, is explored in \cite{Dahl}
with a connection to majorization. In \cite{Niezgoda}, the author develops relations involving sums of Jensen functionals to compare tuples of vectors; a tridiagonal doubly stochastic matrix is used to demonstrate their results.
In the study of mixing rates for Markov chains the assumption of symmetry in the transition matrix is sometimes seen, as in \cite{Boyd2009}. Other times, the Markov chain is assumed to be a path \cite{CihanAkar, Boyd2006} leading to a tridiagonal transition matrix. The properties of symmetric doubly stochastic matrices are explored in \cite{PereiraVali}, where majorization relations are given for the eigenvalues. Properties related to the facial structure of the polytope of tridiagonal doubly stochastic matrices can be found in \cite{FonsecaMarques, CostaFonseca2008, CostaFonseca}. In the former, alternating parity sequences are used to express the number of vertices of a given face, and in the latter, the number of $q$-faces of the polytope for arbitrary $n$ is determined for $q= 1,2,3$.

Many factorization techniques for tridiagonal matrices have been proposed in the literature; for example, \cite{pivot1, pivot2, pivot3} are concerned with $LXL^T$ factorizations and pivoting algorithms, where $L$ is unit lower triangular and $X$ is block diagonal with $1\times 1$ or $2\times 2$ blocks, factorization using parallel computers is studied in \cite{parallel}, a  factorization method for a symmetric singular value decomposition (Takagi factorization) is given for real and complex symmetric tridiagonal matrices in  \cite{SVD1} and \cite{SVD2}, respectively. Here, we present an algorithm to factor entrywise  non-negative, symmetric tridiagonal matrices in order to show complete positivity, discussed below.

One can ask under what conditions is a  tridiagonal doubly stochastic matrix $A$ positive semidefinite. It is known that a symmetric diagonally dominant matrix $A$ with   non-negative diagonal entries is positive semidefinite. Thus, in our case, if
\begin{equation}\label{eq1} b_{i-1}+b_{i}\leq 0.5 \end{equation} for all $i=1, 2,\ldots,n$,  with $b_0=b_{n}=0$, then $A$ is diagonally dominant, and hence $A$ is positive semidefinite. So \eqref{eq1} is sufficient for positive semidefiniteness of a tridiagonal doubly stochastic matrix. However, the following matrix is a tridiagonal doubly stochastic matrix that is positive semidefinite, showing that \eqref{eq1} is not necessary: 
\begin{eqnarray*}
\begin{pmatrix}
0.6&0.4&     0&   0\\
0.4& 13/30& 1/6& 0\\
0& 1/6& 13/30&   0.4\\
0&  0&  0.4 &0.6
\end{pmatrix}.   
\end{eqnarray*}

 %
We note that since tridiagonal doubly stochastic matrices are symmetric, their eigenvalues are real. Further, since the matrices are doubly stochastic, they always have $1$ as an eigenvalue (at least once), with corresponding eigenvector $\textbf{1}$ (the all-ones vector).
If $\lambda$ is an eigenvalue of a stochastic matrix, it is well-known that $\lambda\in \mathbb C$ such that $|\lambda|\leq 1$.   In our context, we note further that $-1\leq \lambda \leq 1$ (i.e., $\lambda\in \mathbb R$); this follows immediately from the assumption that our matrix is symmetric. 
%
%
%
%
In fact, we can say something stronger, as in the following proposition. 

\begin{proposition}\label{prop:eigval}
 Let $n\geq 2$. $\lambda$ is an eigenvalue of an $n \times n$ tridiagonal doubly stochastic matrix if and only if  $\lambda\in [-1,1]$.
\end{proposition}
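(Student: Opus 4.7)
The forward direction ($\lambda$ an eigenvalue $\implies \lambda \in [-1,1]$) is exactly Lemma~\ref{Gersh2}, so the content is really the reverse direction: given any $\lambda \in [-1,1]$ and any $n \geq 2$, exhibit an $n \times n$ tridiagonal doubly stochastic matrix having $\lambda$ as an eigenvalue. My plan is to give an explicit one-parameter family covering all of $[-1,1]$.

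For $n = 2$, consider
\[
B(b) = \begin{pmatrix} 1-b & b \\ b & 1-b \end{pmatrix}, \qquad b \in [0,1].
\]
This is tridiagonal doubly stochastic and has eigenvalues $1$ and $1-2b$. As $b$ sweeps $[0,1]$, $1-2b$ sweeps $[-1,1]$, so setting $b = (1-\lambda)/2$ realizes every $\lambda \in [-1,1]$ as an eigenvalue when $n = 2$.

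For general $n \geq 3$, I would simply append an identity block, i.e.\ consider the $n \times n$ matrix
\[
A = B(b) \oplus I_{n-2} = \begin{pmatrix} 1-b & b & & & \\ b & 1-b & 0 & & \\ & 0 & 1 & & \\ & & & \ddots & \\ & & & & 1 \end{pmatrix}.
\]
This matrix is still tridiagonal (the off-diagonal entries $b_2,\ldots,b_{n-1}$ are all zero, consistent with a tridiagonal pattern), still doubly stochastic (each row and column sums to $1$), and its spectrum is $\{1-2b,\, 1,\, 1,\ldots,1\}$. Choosing $b = (1-\lambda)/2 \in [0,1]$ again gives $\lambda$ as an eigenvalue.

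There is essentially no obstacle here: the only thing to check is that the zero at position $(2,3)$ (and the symmetric position) is compatible with the tridiagonal pattern, which it trivially is, since the pattern only requires entries outside the three central diagonals to vanish, not that any on those diagonals be nonzero. Combined with Lemma~\ref{Gersh2}, the two directions yield the claimed equality of sets.
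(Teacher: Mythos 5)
Your proposal is correct and follows essentially the same route as the paper: realize $\lambda$ as the eigenvalue $1-2b$ of a $2\times 2$ symmetric doubly stochastic block and pad with a direct summand to reach size $n$ (the paper allows an arbitrary $(n-2)\times(n-2)$ tridiagonal doubly stochastic block where you take the identity, which is a special case). The forward direction via Lemma~\ref{Gersh2} is also exactly as in the paper.
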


\begin{proof}
 Suppose  $\lambda\in [-1,1]$ is arbitrary. The $2\times 2$ tridiagonal doubly stochastic matrix $A=\begin{pmatrix} a & b \\b & a\end{pmatrix}$
with $a+b=1$, $a\in [0,1]$, has eigenvalues $1$ and $2a-1$. So choose $a$ such that $2a-1=\lambda$, i.e.\ $a=(\lambda+1)/2$. Then $\lambda$ is an eigenvalue of the constructed matrix $A$. For $n>2$, note that we can construct an $n\times n$ tridiagonal doubly stochastic matrix via $A \oplus B$, where $B$ is an $(n-2) \times  (n-2)$ tridiagonal doubly stochastic matrix, and the constructed matrix $A \oplus B$ has $\lambda$ as an eigenvalue (if $v$ is an eigenvector corresponding to $\lambda$ for the matrix $A$, then $v\oplus \mathbf{0}_{n-2}$, where $\mathbf{0}_{n-2}$ is the $(n-2)$-dimensional zero vector,
is an eigenvector corresponding to $\lambda$ for $A\oplus B$). Thus one can construct a tridiagonal doubly stochastic matrix of arbitrary size having   the prescribed eigenvalue $\lambda$.

The converse follows from the discussion prior to this proposition: that the eigenvalues of a tridiagonal doubly stochastic matrix $A$ all lie in $[-1,1]$.
\end{proof}

\subsection{Complete Positivity}\label{sec:td_CP}
\begin{definition}\label{def:cp}
An $n\times n$ real matrix $A$ is \emph{completely positive} if it can be decomposed as $A=VV^T$, where $V$ is an $n\times k$ entrywise non-negative matrix, for some $k$.
\end{definition}

Equivalently, one can define $A$ to be completely positive provided $A=\sum_{i=1}^kv_iv_i^T$, where $v_i$ are entrywise non-negative vectors (namely, the columns of $V$).

Completely positive matrices are positive semidefinite and symmetric entrywise non-negative; such matrices are called \emph{doubly non-negative}. Doubly non-negative matrices are completely positive for $n\leq 4$, while doubly non-negative matrices that are not completely positive exist for all $n\geq 5$; see \cite{Berman88} and the references therein. In other words, the set of all completely positive matrices forms a strict subset of the set of all doubly non-negative matrices for $n\geq 5$.

We outline below a construction producing the completely positive decomposition $A=\sum_iv_iv_i^T$, which can be found by assuming that, since $A$ is tridiagonal, each $v_i$ should have only two nonzero entries (the $i$-th and $(i+1)$-th entries), and brute-force solving for these entries from the equation $A=VV^T$; these values can also be found somewhat indirectly, assuming our initial condition is zero, through a construction of  pairwise completely positive matrices in  \cite[Theorem 4]{Nathaniel} by taking both matrices to be $A$.

For a given $n\times n$ symmetric tridiagonal matrix $A$,  define the set $\{v_i\}_{i=0}^n$ of cardinality $n+1$, whose elements are $n$-dimensional vectors where the $j$-th component of $v_i$, denoted $(v_i)_j$, is recursively defined by
\begin{equation}\label{veqn}
(v_i)_j= \begin{cases}
     \sqrt{a_i-((v_{i-1})_i)^2} & j=i \\
     b_i/(v_i)_i & j=i+1 \\
     0  & otherwise
   \end{cases}
\end{equation}
 with initial condition $v_{0}=\begin{pmatrix} a_0&0& \dots & 0\end{pmatrix}^T$. This construction yields
   \begin{eqnarray*}
 v_{1}&=&\begin{pmatrix} \sqrt{a_1-a_0^2}& \frac{b_1}{\sqrt{a_1-a_0^2}}&0& \dots & 0\end{pmatrix}^T\\
 v_2&=&\begin{pmatrix} 0 & \sqrt{a_2-\frac{b_1^2}{a_1-a_0^2}}& \frac{b_2}{\sqrt{a_2-\frac{b_1^2}{a_1-a_0^2}}}& 0 & \dots & 0\end{pmatrix}^T\\
  v_3&=&\begin{pmatrix}0 &0&\sqrt{a_3-\frac{b_2^2}{a_2-\frac{b_1^2}{a_1-a_0^2}}}& \frac{b_3}{\sqrt{a_3-\frac{b_2^2}{a_2-\frac{b_1^2}{a_1-a_0^2}}}}& 0 & \dots & 0 \end{pmatrix}^T, \textnormal{ etc.}
 \end{eqnarray*}
 The constant $a_0$ must satisfy $a_0\geq 0$, however it is worth noting that certain values of $a_0$ (the most obvious case being $a_0^2 = a_1$) can lead to some of the $v_i$ vectors being ill-defined.
\begin{proposition}\label{tridiag_decomp}
Let $A$ be an $n\times n$ symmetric tridiagonal matrix and $a_0\geq 0$. If the $v_i$ as defined in Equation~\eqref{veqn} are well-defined, then $A=\sum_{i=0}^nv_iv_i^T$. If the entries for each $v_i$  are non-negative numbers, then $A$ is completely positive.
\end{proposition}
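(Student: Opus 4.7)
The plan is a straightforward induction / direct entry-by-entry computation that exploits the fact that each $v_i$ has at most two nonzero entries (in positions $i$ and $i+1$), so each rank-one matrix $v_iv_i^T$ is supported in the $2\times 2$ principal block indexed by $\{i,i+1\}$. In particular the sum $S := \sum_{i=0}^n v_iv_i^T$ is automatically symmetric and tridiagonal, so it suffices to match $S$ with $A$ on the main diagonal and on the first superdiagonal.

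First I would handle the superdiagonal. For $1 \leq i \leq n-1$, the only vector whose support contains both $i$ and $i+1$ is $v_i$ itself, so
\[
S_{i,i+1} = (v_i)_i\,(v_i)_{i+1} = \sqrt{a_i-((v_{i-1})_i)^2}\cdot\frac{b_i}{\sqrt{a_i-((v_{i-1})_i)^2}} = b_i,
\]
as required; the square roots cancel by the very definition of the recursion in \eqref{veqn}.

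Next I would handle the diagonal. For $1\leq i\leq n$, only $v_{i-1}$ and $v_i$ can contribute to position $(i,i)$, and using \eqref{veqn} we get
\[
S_{i,i} = ((v_{i-1})_i)^2 + ((v_i)_i)^2 = ((v_{i-1})_i)^2 + \bigl(a_i - ((v_{i-1})_i)^2\bigr) = a_i.
\]
The main bookkeeping I need to be careful with is the boundary. At $i=1$, the role of $((v_{i-1})_i)^2$ is played by $((v_0)_1)^2 = a_0^2$, and the cancellation still gives $a_1$. At $i = n$, one verifies that the term $(v_n)_{n+1} = b_n/(v_n)_n$ is a non-entry (using the convention $b_n = 0$), so $v_n$ contributes only to $S_{n,n}$; and for the superdiagonal the largest relevant index is $i = n-1$, so no spurious entry appears in row $n+1$.

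With $S = A$ established, the complete-positivity conclusion is immediate from Definition~\ref{def:cp}: form the $n\times(n+1)$ matrix $V$ whose columns are $v_0,v_1,\ldots,v_n$. Then $A = VV^T$, and if every component of every $v_i$ is non-negative, then $V$ is entrywise non-negative, so $A$ is completely positive. The only mild obstacle is the edge-case bookkeeping at the two ends of the recursion and making sure that the denominators $\sqrt{a_i-((v_{i-1})_i)^2}$ that appear in $(v_i)_{i+1}$ are nonzero whenever they are used; this is exactly the ill-definedness caveat already flagged in the paragraph preceding the proposition, and the statement assumes a choice of $a_0\geq 0$ for which this does not occur.
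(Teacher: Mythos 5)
Your proposal is correct and follows essentially the same route as the paper's proof: an entry-by-entry verification that $\sum_i v_iv_i^T$ is tridiagonal and matches $A$ on the superdiagonal (where only $v_i$ contributes, giving $b_i$ by cancellation of the square roots) and on the diagonal (where $v_{i-1}$ and $v_i$ contribute $((v_{i-1})_i)^2$ and $a_i-((v_{i-1})_i)^2$), followed by assembling the $v_i$ into a non-negative $V$ with $A=VV^T$. Your explicit attention to the boundary cases $i=1$ and $i=n$ is a small bonus over the paper's write-up, but the argument is the same.
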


We note that if $A$ is entrywise non-negative, which includes the case of $A$ being doubly stochastic, and the entries of the $v_i$ are all real, then they are automatically non-negative.

\begin{proof}
Consider a symmetric tridiagonal matrix $A$ such that the vectors in Equation~\eqref{veqn} are well-defined. Let $V_i=v_iv_i^T$ for all $i=0,1,\dots,n$ and $\tilde{A}=\sum_{i=0}^nV_i$. We wish to show that $\tilde{A}=A$. From the definition of the $v_i$ given in Equation~\eqref{veqn}, each $V_i$ is tridiagonal with only up to four nonzero entries and so $\tilde{A}$ itself is tridiagonal. Now, consider a component $\tilde{a}_{j,j+1}$ of $\tilde{A}$, where $j=1,2,\dots,n-1$. The only $V_i$ that has a nonzero entry in the $(j,j+1)$-th component is $V_j$ as $v_j$ is the only vector with both the $j$ and $(j+1)$-th components being nonzero. The $(j,j+1)$-th component of $V_j$ is in fact $b_j$ and so $\tilde{a}_{j,j+1}=b_j$. By symmetry, we also have $\tilde{a}_{j+1,j}=b_{j}$. Now consider a component  on the diagonal of $\tilde{A}$:  $\tilde{a}_{jj}$, where $j=1,2,\dots,n$. The only $V_i$ that have nonzero entries in the $(j,j)$-th component are $V_j$ and $V_{j-1}$, with respective values $a_j-((v_{j-1})_j)^2$ and $((v_{j-1})_j)^2$. Clearly then $\tilde{a}_{jj}=a_j$ for all $j=1,2,\dots,n$. Therefore $A=\tilde{A}=\sum_{i=0}^nv_iv_i^T$; If the entries for each $v_i$ are non-negative numbers, then $A$ is completely positive.
\end{proof}

 There is a number of other related algorithms for finding the decomposition of a completely positive matrix. Method 5.3 of \cite{DD} equates showing that a circular matrix $A$ is completely positive with finding a solution to an optimization problem using a recurrence relation involving $a_i$ and $b_i$, not unlike Equation~\eqref{eq1}. The algorithm in \cite{Kaykobad}, which uses vertex-edge incidence matrices, applies to  non-negative diagonally dominant symmetric matrices. In the case of tridiagonal matrices, our algorithm is in fact more general than that of \cite{Kaykobad}, as our algorithm applies to non-negative positive semidefinite (tridiagonal) matrices. Our algorithm also gives the minimal completely positive decomposition (that is, it attains the cp-rank; see Corollary~\ref{cor:cp-rank}). 

\begin{example}\label{5x5}
Consider the $5\times 5$ case, which is the first (in terms of smallest dimension) non-trivial case.  For the matrices
\begin{eqnarray*}
A=\begin{pmatrix}
3/4&1/4&0 & 0 & 0\\
1/4&1/2&1/4& 0 & 0\\
0 & 1/4 & 1/2& 1/4 & 0\\
0 & 0 & 1/4& 1/2 & 1/4\\
0 & 0 & 0 & 1/4 & 3/4
\end{pmatrix} \quad \textnormal{and}\quad B= \begin{pmatrix}
7/9&2/9&0 & 0 & 0\\
2/9&5/9&2/9 & 0 & 0\\
0 & 2/9 & 7/9& 0 & 0\\
0 & 0 & 0& 8/9 & 1/9\\
0 & 0 & 0 & 1/9 & 8/9
\end{pmatrix}
\end{eqnarray*}
our construction with $a_0=0$ gives $A=VV^T$ and $B=WW^T$ where
\begin{eqnarray*}
V=\begin{pmatrix}
\frac{1}{2}\sqrt{3}&0&0 & 0 & 0 \\
\frac{1}{2\sqrt{3}}&\frac{1}{2}\sqrt{\frac{5}{3}}&0 & 0 & 0 \\
0 & \frac{1}{2}\sqrt{\frac{3}{5}}& \frac{1}{2}\sqrt{\frac{7}{5}}& 0 & 0\\
0 & 0 & \frac{1}{2}\sqrt{\frac{5}{7}}& \frac{3}{2\sqrt{7}} & 0\\
0 & 0 & 0 & \frac{1}{6}\sqrt{7} & \frac{1}{3}\sqrt{5}
\end{pmatrix} \quad \textnormal{and}\quad
W=\begin{pmatrix}
\frac{1}{3}\sqrt{7}&0&0 & 0 & 0\\
\frac{2}{3\sqrt{7}}&\frac{1}{3}\sqrt{\frac{31}{7}}&0 & 0 & 0\\
0 & \frac{2}{3}\sqrt{\frac{7}{31}}& \sqrt{\frac{21}{31}}& 0 & 0\\
0 & 0 & 0& \frac{2}{3}\sqrt{2} & 0\\
0 & 0 & 0 & \frac{1}{6\sqrt{2}} & \frac{1}{2}\sqrt{\frac{7}{2}}
\end{pmatrix}.
\end{eqnarray*}
Therefore the matrices $A$ and $B$ are completely positive. Note that $V$ and $W$ should be $5\times 6$ matrices; however, the selection of $a_0=0$ forces $v_0$ to be the zero vector and as such the first column of both $V$ and $W$ is all zeroes, so can be omitted. For this reason, choosing $a_0=0$ often leads to a much simpler decomposition.

It is important to emphasise here that a decomposition proving that a matrix $A$ is completely positive is in general not unique. In particular, the choice of $a_0$ can lead to different decompositions, assuming they are still well-defined.
For example, if we had instead chosen $a_0=3/4$, the matrix
\begin{eqnarray*}
\tilde{W}=\begin{pmatrix}
\frac{3}{4}&\frac{1}{12}\sqrt{31}&0&0 & 0 & 0\\
0&\frac{8}{3\sqrt{31}}&\frac{1}{3}\sqrt{\frac{91}{31}}&0 & 0 & 0\\
0&0 & \frac{2}{3}\sqrt{\frac{31}{91}}& \sqrt{\frac{57}{91}}& 0&0 \\
0&0 & 0 & 0& \frac{2}{3}\sqrt{2} & 0\\
0&0 & 0 & 0 & \frac{1}{6\sqrt{2}} & \frac{1}{2}\sqrt{\frac{7}{2}}
\end{pmatrix}
\end{eqnarray*}
works in the decomposition of $B$.
\end{example}

If the given matrix is in block form but our decomposition does not work, we may employ the technique illustrated in the example below: treating each block separately.
\begin{example}\label{triBlockEx}
Consider the matrix
\begin{eqnarray*}
C=\begin{pmatrix}
1 & 0 & 0 & 0 & 0\\
0 & 1/2&1/2&0 & 0\\
0 & 1/2&1/2&0 & 0\\
0 & 0 & 0 & 1/2 & 1/2\\
0 & 0 & 0 & 1/2 & 1/2
\end{pmatrix}.
\end{eqnarray*}
Since we have $b_1=0$ this gives $(v_1)_2=0$. Therefore we also have $(v_3)_3=\sqrt{a_3-\frac{b_2^2}{a_2}}=\sqrt{1/2-1/2}=0$. Hence, regardless of our choice of $a_0$ the component $(v_3)_4$ is never well-defined. To get around this fact consider $C$ as the block matrix
\begin{eqnarray*}
C=\begin{pmatrix}
C_1 & 0_{3,2}\\
0_{2,3} & C_2
\end{pmatrix}.
\end{eqnarray*}
where $0_{n,m}$ denotes the $n\times m$ all-zeros matrix and
\begin{eqnarray*}
C_1=\begin{pmatrix}
1 & 0 & 0\\
0&1/2&1/2\\
0&1/2&1/2
\end{pmatrix} \quad \textnormal{and}\quad
C_2=\begin{pmatrix}
1/2&1/2\\
1/2&1/2\\
\end{pmatrix}.
\end{eqnarray*}
The matrices $C_1$ and $C_2$ on the other hand we have no issues with decomposing. Choosing $a_0=0$ we obtain
\begin{eqnarray*}
V_1=\begin{pmatrix}
1&0&0\\
0&\frac{1}{\sqrt{2}} & 0\\
0&\frac{1}{\sqrt{2}}&0
\end{pmatrix} \quad \textnormal{and}\quad
V_2=\begin{pmatrix}
\frac{1}{\sqrt{2}} & 0\\
\frac{1}{\sqrt{2}}&0
\end{pmatrix}
\end{eqnarray*}
where $C_1=V_1V_1^T$ and $C_2=V_2V_2^T$. Therefore
\begin{eqnarray*}
V=\begin{pmatrix}
V_1 & 0_{3,2}\\
0_{2,3} & V_2
\end{pmatrix}
=\begin{pmatrix}
1&0&0&0&0\\
0&\frac{1}{\sqrt{2}} & 0&0&0\\
0&\frac{1}{\sqrt{2}}&0&0&0\\
0&0&0&\frac{1}{\sqrt{2}} & 0\\
0&0&0&\frac{1}{\sqrt{2}}&0
\end{pmatrix}
\end{eqnarray*}
where $C=VV^T$ and hence $C$ is completely positive.
\end{example}

The decomposition given by Equation~\eqref{veqn} leads to the following result, which includes tridiagonal doubly stochastic positive definite matrices. Corollary~4.11 of \cite{BH} (doubly non-negative tridiagonal matrices are completely positive) encompasses this result, but its proof relies on deletion of a leaf from a connected acyclic graph;  our method of proof is more direct, does not involve graph theory,  and relies solely on Equation~\eqref{veqn} and observing the leading principal minors of the matrix.

\begin{proposition}\label{PD_CP}
If $A$ is a symmetric tridiagonal 
positive definite entrywise non-negative matrix, then $A$ is  completely positive.
\end{proposition}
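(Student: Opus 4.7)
My plan is to apply the construction of Proposition~\ref{tridiag_decomp} with initial value $a_0=0$, and to show that under the doubly non-negative hypothesis the resulting vectors $v_i$ are well-defined with non-negative entries. I first reduce to the irreducible case. If $b_k=0$ for some $1\leq k\leq n-1$, then $A=A_1\oplus A_2$ is block diagonal with each block symmetric tridiagonal and doubly non-negative, and I invoke induction on $n$ together with the observation (as in Example~\ref{triBlockEx}) that the direct sum of completely positive decompositions of the blocks, padded with zeros, yields a completely positive decomposition of $A$.

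So assume henceforth that $A$ is irreducible, i.e.\ $b_i>0$ for all $i=1,\ldots,n-1$. Let $D_i=\det(A_i)$ where $A_i$ is the $i\times i$ leading principal submatrix, and put $D_0=1$. The standard tridiagonal determinant recurrence reads
$$D_i=a_i D_{i-1}-b_{i-1}^2 D_{i-2}.$$
A short induction on $i$, using the recursive definition in Equation~\eqref{veqn} with $a_0=0$ (so that $(v_0)_1=0$ and hence $((v_1)_1)^2=a_1=D_1/D_0$), shows the key identity
$$\bigl((v_i)_i\bigr)^2=\frac{D_i}{D_{i-1}},$$
valid at each step where $D_{i-1}\neq 0$.

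The crux is to verify $D_i>0$ for $i=1,\ldots,n-1$. Assume toward contradiction that $k$ is the smallest index with $D_k=0$; since $D_0=1$ we have $k\geq 1$, and by minimality $D_{k-1}>0$. The determinant recurrence then gives
$$D_{k+1}=a_{k+1}D_k-b_k^2 D_{k-1}=-b_k^2 D_{k-1}<0,$$
where the strict inequality uses irreducibility ($b_k>0$). But $A$ is positive semidefinite, so every leading principal minor must be non-negative, a contradiction. Hence $D_i>0$ for $i\leq n-1$, while $D_n\geq 0$ by positive semidefiniteness.

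Consequently $(v_i)_i=\sqrt{D_i/D_{i-1}}>0$ for $i\leq n-1$, so each $(v_i)_{i+1}=b_i/(v_i)_i$ is well-defined and non-negative (since $b_i\geq 0$); and $(v_n)_n=\sqrt{D_n/D_{n-1}}\geq 0$ poses no issue since $v_n$ has no $(n+1)$-th entry to compute. All entries of every $v_i$ are non-negative, so Proposition~\ref{tridiag_decomp} yields $A=\sum_{i=0}^n v_iv_i^T$ with non-negative $v_i$, establishing complete positivity. The main obstacle is the sign argument in the irreducible case; once one sees that a zero leading principal minor would force the next one to be strictly negative, everything else is routine verification.
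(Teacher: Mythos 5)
Your proof is correct, and it is a genuinely sharper version of the paper's argument. Both proofs rest on the same underlying idea --- that with $a_0=0$ the quantities $((v_i)_i)^2$ produced by Equation~\eqref{veqn} are ratios $D_i/D_{i-1}$ of consecutive leading principal minors --- but the paper establishes this only by computing the first three cases explicitly, invokes Sylvester's criterion for positive \emph{definiteness}, and concludes by ``continuing in this manner.'' As written, that argument really only covers the nonsingular case: for a singular doubly non-negative matrix (e.g.\ the all-$1/2$ matrix of order $2$) the leading principal minors are merely non-negative, and Sylvester's criterion in the form quoted does not apply. Your proof closes exactly this gap. The reduction to the irreducible case via the block structure disposes of interior zero $b_k$'s, the three-term recurrence $D_i=a_iD_{i-1}-b_{i-1}^2D_{i-2}$ makes the key identity $((v_i)_i)^2=D_i/D_{i-1}$ an honest induction rather than a pattern, and the observation that a first vanishing minor $D_k=0$ with $k\leq n-1$ would force $D_{k+1}=-b_k^2D_{k-1}<0$ (impossible for a positive semidefinite matrix) is precisely what is needed to show that only the \emph{last} minor $D_n$ can vanish --- which is harmless, since $v_n$ has no $(n+1)$-th entry requiring division by $(v_n)_n$. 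The one stylistic quibble is in the contradiction setup: you should say ``the smallest index $k\leq n-1$ with $D_k=0$'' (a priori the smallest vanishing minor could be $D_n$ itself, which is permitted); the surrounding sentences make clear this is what you intend. What your approach buys is a complete treatment of the semidefinite boundary case; what the paper's buys is brevity and a direct appeal to a named criterion.
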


\begin{proof}
 Sylvester's criterion tells us that for a real symmetric matrix $A$, positive definiteness is equivalent to all leading principal minors of $A$ being positive.

Note that all square roots in the denominators of the entries in Equation~(\ref{veqn}) being well-defined with $a_0=0$ imply that all leading principal minors are positive. Indeed,
taking $a_0=0$ in the construction of Equation~\eqref{veqn}, we find the following.
For  $v_1$ to be well-defined, we have $a_1> 0$, which is the $1\times1$ leading principal minor.

For $v_2$ to be well-defined, we have $\displaystyle a_2-\frac{b_1^2}{a_1}> 0$, which is equivalent to $\displaystyle a_1a_2-b_1^2> 0$, which is the $2\times 2$ leading principal minor.

For  $v_3$ to be well-defined, we have $\displaystyle a_3-\frac{b_2^2}{a_2-\frac{b_1^2}{a_1}}> 0$, which is equivalent to $\displaystyle a_1a_2a_3-a_3b_1^2-a_1b_2^2> 0$, which is the $3\times 3$ leading principal minor.

Continuing in this manner, the result follows.
\end{proof}


 The construction of \cite{HwangPo} provides a method to construct a symmetric doubly stochastic matrix with prescribed eigenvalues, however it becomes trivial (most eigenvalues need to be 1) if we further restrict to tridiagonal matrices. The positive definiteness of tridiagonal matrices was considered in \cite{revisited}, which also makes use of chain sequences:

\begin{definition}Let $k$ be a positive integer. A sequence $\{\alpha_k\}_{k>0}$ is called a  \emph{(positive) chain sequence} if there exists a parameter sequence $\{g_k\}_{k>0}$ such that
$$\alpha_k=g_k(1-g_{k-1}),$$ with $0 \leq g_0 < 1$ and $0 < g_k < 1$, for $k > 0.$
\end{definition}

 In Theorem~\ref{SylWW} below, we take advantage of the doubly stochastic structure of the matrix $A$ to efficiently determine the existence of a unique   tridiagonal doubly stochastic matrix with prescribed diagonal entries, rather than prescribed eigenvalues. We then use Sylvester's criterion, the Wall-Wetzel Theorem (\cite[Theorem 3.3]{revisited}, \cite{WW}), and our construction presented in Equation~\eqref{veqn} to show the equivalence of positive definiteness, complete positivity, and a set of  inequalities involving the principal minors of the given matrix. 

\begin{theorem}\label{SylWW}

Let $A$ be a tridiagonal doubly stochastic matrix of the form
$$A=\begin{pmatrix}
a_1 & b_1 &&&&&\\
b_1 & a_2 & b_2 &&&& \\
&\ddots&\ddots&\ddots&&&\\
&&\ddots&\ddots&\ddots&&\\
&&&\ddots&\ddots&\ddots&\\
&&&&b_{n-2}&a_{n-1}&b_{n-1}\\
&&&&&b_{n-1}&a_n
\end{pmatrix}.$$

Denote $\mathbf{a}=(a_1, a_2, \ldots, a_{n-1})\in \mathbb{R}^{n-1}.$ Then $A$ is uniquely determined by the vector $\mathbf{a}$ if and only if $\mathbf{a}$ fulfills Equations~\eqref{eq:a_ipos} and~\eqref{eq:a_i} given below:

\begin{equation}\label{eq:a_ipos} 0\leq a_i \leq 1  \end{equation}  for all $i=1,2,\ldots,n-1.$
 \begin{eqnarray} \label{eq:a_i}
0<(a_1+a_3+\ldots+a_{2k-1})-(a_2+a_4+a_6+\ldots+a_{2k})<1
\end{eqnarray}   for all $k=1,2,\ldots,\left\lfloor\frac{n}{2}\right\rfloor.$

Denote $\mathbf{b}=(b_1, b_2, \ldots, b_{n-1})\in \mathbb{R}^{n-1}.$ Then $A$ is uniquely determined by the vector $\mathbf{b}$ if and only if $\mathbf{b}$ fulfills Equation~\eqref{b_i} below:

\begin{equation} \label{b_i}
b_{i}=\begin{cases}
      1-a_1+a_2-a_3+\ldots-a_{i}  & \quad \text{if } i \text{ is odd}  \\
       a_1-a_2+a_3-\ldots-a_{i}  & \quad \text{if }i  \text{ is even}
\end{cases}
\end{equation}
for all $i=1,2,3,\ldots, n-1.$

Moreover, the following are equivalent:
\begin{enumerate}[(a)]
\item $A$ is positive definite
\item $A$ is completely positive
\item  \begin{equation}\nonumber a_i > b^2_{i-1}\frac{\det (A_{i-2})}{\det(A_{i-1})}\end{equation}
for all $i=2,3,\ldots,n,$ where $A_i$ is the upper left $i\times i$ submatrix of $A$ (with $A_n=A$).
\end{enumerate}
\end{theorem}

\begin{proof}
An $n\times n$ symmetric tridiagonal doubly stochastic matrix is uniquely determined by its diagonal entries $\mathbf{a}=(a_1, a_2, \ldots, a_n)$:  A given vector $\mathbf{a}\in \mathbb{R}^n$ determines a tridiagonal doubly stochastic matrix if and only if both Equation~\eqref{eq:a_ipos}  and \eqref{eq:a_i} hold. Indeed, since $b_{i}\geq 0$ for all $i=1,2,\ldots,n-1,$ it can easily be seen that  Equation~\eqref{b_i} is equivalent to Equation~\eqref{eq:a_i}, while Equation~\eqref{eq:a_ipos} is a necessary condition for the matrix to be doubly stochastic.

Next, we note that the determinant of $A_n$ satisfies a three-term recurrence relation
\begin{equation}\label{3tr}\det(A_i)=a_i \det (A_{i-1})-b^2_{i-1}\det (A_{i-2})\end{equation}
for all $i=2,3,\ldots,n$ where $\det(A_0)$ is defined to be $1.$
Sylvester's criterion states that $A$ is positive definite if and only if  $\det(A_i)>0$ for all $i$, from which it follows that \eqref{3tr} implies that
\begin{equation}\label{eq2}a_i > b^2_{i-1}\frac{\det (A_{i-2})}{\det(A_{i-1})}\end{equation}
for $i=2,3,\ldots,n$ if and only if $A$ is positive definite.

We now consider the sequence $$\alpha_k=\frac{b_k^2}{a_ka_{k+1}},$$ for all $k=1, 2, \dots, n$, which, assuming entrywise non-negativity of $A$,  is a chain sequence with  $g_0=0$ and  and  $$g_k=\frac{b_k^2}{a_ka_{k+1}}\cdot \frac{1}{1-g_{k-1}}$$ for $k=1,2,\ldots, n$. Note that $0<g_k<1$ for $k=1, 2, \dots, n$. Indeed, it is clear that $g_1, \dots, g_n$ are positive, and to see they are less than one, we note that the inequality $g_1<1$ is equivalent to $\displaystyle \operatorname{det}\begin{pmatrix}
   a_1&b_1\\b_1&a_2
\end{pmatrix}=a_1a_2-b_1^2>0$, the inequality $g_2<1$ is equivalent to $\displaystyle \operatorname{det}\begin{pmatrix}
   a_1&b_1 & 0\\b_1&a_2&b_2\\ 0 & b_2 & a_3
\end{pmatrix}=a_1a_2a_3-a_1b_2^2-a_3b_1^2>0$, and in general $g_k<1$ is equivalent to $\displaystyle \operatorname{det}\begin{pmatrix}a_1&b_1&& \\ b_1 & a_2 & b_2 &\\ &\ddots&\ddots&\ddots&& \\ &&b_{k-2}&a_{k-1}&b_{k-1} \\&&&b_{k-1}&a_k \end{pmatrix}>0$

In our algorithm to construct vectors $v_i$ with components given by Equation~\eqref{veqn}, we require all radicands to be positive. Letting $a_0=0,$ we have the radicand in $(v_k)_k$ is $$a_k-\frac{b_{k-1}^2}{a_{k-1}(1-g_{k-2})}=a_3(1-\frac{b_2^2}{a_2(1-g_1)})=a_k(1-g_{k-1})$$
for $k=2,3,\ldots, n.$
The Wall-Wetzel Theorem \cite{revisited, WW}  states that a real symmetric tridiagonal matrix with positive diagonal entries is positive definite if and only if $\displaystyle \left\{\frac{b_k^2}{a_{k}a_{k+1}}\right\}_{k=1}^{n-1}$ is a chain sequence. Note that a matrix cannot be positive definite if there is at least one diagonal entry equal to 0. This 
implies that   the radicand in $(v_k)_k$ is positive for all $k$. Our construction therefore provides well-defined vectors $v_k$, and therefore shows that $A$ is completely positive. Thus we have (a) $\Rightarrow$
 (b); that is, positive definiteness implies complete positivity.

It is clear that (b) implies (a). Thus the equivalence of (a), (b), and (c) follows.
\end{proof}

Note that $A_i$ only contains $a_1, a_2,\ldots, a_{i}$ for $i=1,2, \ldots, n,$ so \eqref{eq2} in the above proof gives a condition (namely, a lower bound) that each $a_i$ has to satisfy and only depends on the entries $a_1, a_2, \ldots, a_{i-1}$.
For example,
$$a_4 > b_3^2 \frac{\det(A_2)}{\det(A_3)}=(1-a_1+a_2-a_3)^2 \frac{\det\begin{pmatrix}a_1 & 1-a_1 \\ 1-a_1 & a_2 \end{pmatrix}}{\det\begin{pmatrix}a_1 & 1-a_1&0 \\ 1-a_1 & a_2 & a_1-a_2 \\ 0& a_1-a_2 & a_3 \end{pmatrix}}$$
which only contains $a_1,a_2,a_3$ on the righthand side.


\begin{definition}\label{irred}
Let $A$ be an $n\times n$ matrix. $A$ is said to be \emph{reducible} if it can be transformed via row and column permutations to a block upper triangular matrix, with block sizes $<n$. $A$ is \emph{irreducible} if it is not reducible.
\end{definition}

Note that in the context of tridiagonal doubly stochastic matrices, irreducibility is equivalent to $b_i>0$ for all $i=1, \dots, n$, i.e.\ that $A$ cannot be written as the direct sum of smaller tridiagonal doubly stochastic matrices. When considering whether or not a tridiagonal doubly stochastic matrix $A$ is completely positive, we may assume $A$ is irreducible. Indeed, if $A$ were a direct sum of smaller doubly stochastic matrices---implying that some $b_i=0$---we could consider these smaller doubly stochastic matrices separately. The  $V$ corresponding to $A$ in the decomposition would have the same direct sum structure: it would be the direct sum of the $V$'s corresponding to the smaller doubly stochastic matrices. If some $b_i=0,$ then the corresponding $v_i$ only has one nonzero element.  $B$ in Example 1 is a direct sum of two doubly stochastic matrices and $C$ in Example 2 is a direct sum of three doubly stochastic matrices.

It is clear that if a matrix $A$ is completely positive, then it is automatically positive semidefinite.
Taussky's theorem \cite[Theorem II]{Taussky} allows us to use the eigenvalues of a given tridiagonal doubly stochastic matrix to characterize a partial  converse statement.

\begin{theorem}(Taussky's Theorem) Let $A$ be an $n\times n$  irreducible  matrix. An eigenvalue of $A$ cannot lie on the boundary of a Gershgorin disk unless it lies on the boundary of every Gershgorin disk.
\end{theorem}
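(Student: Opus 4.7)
The plan is the classical eigenvector-maximum argument underlying Taussky's theorem. Let $x$ be an eigenvector for $\lambda$, rescaled so that $\max_j |x_j| = 1$, and set $S = \{k : |x_k| = 1\}$. Extracting the $k$-th coordinate of $Ax = \lambda x$ gives $(\lambda - a_{kk})x_k = \sum_{j \ne k} a_{kj} x_j$; taking absolute values and using $|x_j| \le 1$ yields, for every $k \in S$,
\[
|\lambda - a_{kk}| \;\le\; \sum_{j \ne k} |a_{kj}|\,|x_j| \;\le\; R_k,
\]
where $R_k = \sum_{j \ne k} |a_{kj}|$ is the deleted row sum and $G_k = \{z : |z - a_{kk}| \le R_k\}$ is the $k$-th Gershgorin disk.

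Next I would use the hypothesis that $\lambda$ lies on the boundary of some Gershgorin disk, read in the standard way as ``$\lambda$ is a boundary point of the union $\bigcup_i G_i$'', to conclude $|\lambda - a_{kk}| \ge R_k$ for every $k$, since $\lambda$ cannot be an interior point of any $G_k$. Combining this with the previous display forces equality $|\lambda - a_{kk}| = R_k$ for every $k \in S$. The equality case of the triangle inequality above then forces $|x_j| = 1$, i.e.\ $j \in S$, for every $j$ with $a_{kj} \ne 0$. In graph-theoretic language, with $G(A)$ the digraph on $\{1,\ldots,n\}$ whose edges are $\{(i,j) : a_{ij} \ne 0\}$, this says that $S$ is closed under outgoing edges from its members.

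Irreducibility of $A$ is equivalent to $G(A)$ being strongly connected, and a nonempty, forward-closed subset of a strongly connected digraph must equal the whole vertex set; hence $S = \{1,\ldots,n\}$. The equality $|\lambda - a_{kk}| = R_k$ established for $k \in S$ therefore holds for every $k$, which is the desired conclusion.

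The main subtle step is the transition in the second paragraph from ``$\lambda$ on the boundary of a single disk'' to the simultaneous bound $|\lambda - a_{kk}| \ge R_k$ for every $k$; this relies on reading the hypothesis as boundary-of-union and is what glues the local triangle-inequality computation to the global conclusion. Once this reading is in place, the rest is a mechanical chain of triangle-inequality saturations married to strong connectivity, with no further nontrivial ingredient needed.
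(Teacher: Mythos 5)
The paper offers no proof of this theorem---it is quoted from Taussky's 1949 article and used as a black box---so there is nothing of the authors' to compare your argument against; judged on its own, your proof is the standard one and is correct. The one point worth emphasizing is that the interpretive move you flag in your final paragraph is not merely a convenient reading but a necessary correction of the statement as written: taken literally, ``lies on the boundary of a Gershgorin disk'' makes the theorem false. For example,
\[
A=\begin{pmatrix}0&1&0\\ 1&0&1\\ 5&5&0\end{pmatrix}
\]
is irreducible, its characteristic polynomial is $\lambda^{3}-6\lambda-5=(\lambda+1)(\lambda^{2}-\lambda-5)$, and the eigenvalue $\lambda=-1$ lies on the boundary of the first disk $\{|z|\le 1\}$ while sitting in the interior of the second disk $\{|z|\le 2\}$ and the third $\{|z|\le 10\}$. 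So the hypothesis must be read, as you read it, as ``$\lambda$ is a boundary point of $\bigcup_k G_k$'' (equivalently, $\lambda$ is interior to no $G_k$), which is exactly what yields $|\lambda-a_{kk}|\ge R_k$ for all $k$. Granting that reading, the rest of your argument---the triangle-inequality bound for $k\in S$, the saturation forcing $|x_j|=1$ along out-edges from $S$, and strong connectivity forcing $S=\{1,\dots,n\}$---is complete and is the classical proof. It is worth adding that in the only place the paper invokes the theorem (an irreducible, diagonally dominant matrix with nonnegative diagonal and the eigenvalue $0$), the two readings coincide, because every disk then lies in the closed right half-plane and $0$ cannot be interior to their union.
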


Equivalently, Taussky's theorem states that if $A$ is an irreducible, diagonally dominant matrix with at least one inequality of the diagonal dominance being strict (in the context of tridiagonal doubly stochastic matrices, this means that \eqref{eq1} holds with strict inequality for at least one $i$), then $A$ is nonsingular. This is in fact the original formulation of the theorem in \cite{Taussky}.

Recall  that if a matrix $A$ is  symmetric, diagonally dominant, and  all its diagonal entries are non-negative, then $A$ is positive semidefinite.

\begin{proposition}\label{PSD_PD}
Let $A$ be an $n\times n$ irreducible tridiagonal doubly stochastic matrix. If $n\geq 3$ and $A$ is diagonally dominant, then $A$ is positive definite or, equivalently, $A$ is nonsingular.
\end{proposition}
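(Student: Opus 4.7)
The plan is to combine the standard fact that a symmetric diagonally dominant matrix with non-negative diagonal is positive semidefinite, with Taussky's theorem to upgrade ``positive semidefinite'' to ``positive definite.'' The challenge is showing that some row of $A$ satisfies the diagonal dominance with strict inequality, so that Taussky's theorem applies.

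First, I would record that $A$ is automatically symmetric (as noted in Section~2.1) and all its diagonal entries $a_i = 1 - b_{i-1} - b_i$ are non-negative since $A$ is doubly stochastic; together with the hypothesis of diagonal dominance, this immediately yields that $A$ is positive semidefinite.

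Next, to invoke Taussky's theorem, I need at least one strict diagonal dominance inequality. Consider the first row: diagonal dominance gives $a_1 = 1 - b_1 \geq b_1$, i.e.\ $b_1 \leq 1/2$, and the inequality is strict if and only if $b_1 < 1/2$. Since $n \geq 3$, a second row exists, and diagonal dominance there gives $a_2 = 1 - b_1 - b_2 \geq b_1 + b_2$, equivalently $b_1 + b_2 \leq 1/2$. Irreducibility forces $b_2 > 0$, so $b_1 < 1/2$, and therefore the first row's inequality is strict. Taussky's theorem then implies $A$ is nonsingular.

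Finally, a positive semidefinite nonsingular matrix is positive definite, completing the proof. The hypothesis $n\geq 3$ is essential in the step that produces a strict inequality: for $n=2$, the matrix $\begin{pmatrix} 1/2 & 1/2 \\ 1/2 & 1/2 \end{pmatrix}$ is irreducible and diagonally dominant (with equality in both rows) but singular, showing that this is the genuinely delicate point of the argument.
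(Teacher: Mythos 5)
Your proof is correct and takes essentially the same approach as the paper's: both rest on Taussky's theorem together with the key observation that diagonal dominance in the second row ($b_1+b_2\leq 1/2$) combined with irreducibility ($b_2>0$) controls $b_1$. You apply the direct ``irreducible, diagonally dominant, one strict inequality implies nonsingular'' formulation by exhibiting strictness in row 1, whereas the paper argues by contradiction via Gershgorin disk boundaries; these are the same argument in contrapositive form.
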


\begin{proof} If $A$ is diagonally dominant, then $b_i+b_{i+1}\leq 0.5$ for all $i=0,1,2,\ldots,n$ with $b_0=b_{n}=0.$ Suppose $0$ is an eigenvalue of $A,$ then by Gorshgorin circle theorem, there exists some $i$ such that $b_i+b_{i+1}=0.5$ meaning that 0 is an eigenvalue on the boundary of a disk, so by Taussky's Theorem, every disk must have boundary at 0; that is, $b_i+b_{i+1}=0.5$ for all $i$. We have $b_1=0.5,$ which makes $b_2=0,$ which contradicts with the assumption that $A$ is irreducible.
\end{proof}

Note that the tridiagonal doubly stochastic matrix $\begin{pmatrix} 1/2 & 1/2\\ 1/2& 1/2\end{pmatrix}$ is the only $2\times 2$ tridiagonal doubly stochastic matrix that is positive semidefinite, without being positive definite (that is, it is the only $2\times 2$ positive semidefinite tridiagonal doubly stochastic matrix with zero as an eigenvalue). One can see this from \eqref{eq1} and the equivalent formulation of Taussky's theorem. One can verify that it is completely positive with $V=1/\sqrt{2}(1\,\, 1)^T$.

A number of corollaries follow from Proposition~\ref{PSD_PD}.

\begin{corollary}\label{cor:1/2}
Let $A$ be an $n\times n$  tridiagonal doubly stochastic matrix with $n\geq 3$. If $A$ is diagonally dominant, with zero as an eigenvalue, then $A$ must be reducible with at least one block of the form $\begin{pmatrix} 1/2 & 1/2\\ 1/2& 1/2\end{pmatrix}$.
\end{corollary}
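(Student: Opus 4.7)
The plan is to deduce the corollary by a single application of the contrapositive of Proposition~\ref{PSD_PD}, followed by decomposing $A$ into its irreducible pieces and applying Proposition~\ref{PSD_PD} again to the piece that carries the zero eigenvalue. First, since $A$ is diagonally dominant, singular (as $0$ is an eigenvalue) and $n\geq 3$, Proposition~\ref{PSD_PD} (contrapositive form) immediately rules out $A$ being irreducible. So $A$ is reducible.

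Next, I would invoke the observation in the paragraph following Definition~\ref{irred}: for a tridiagonal doubly stochastic matrix, reducibility is equivalent to some off-diagonal $b_i$ vanishing, and in that case the matrix is literally a direct sum of smaller tridiagonal doubly stochastic matrices with no permutation needed. Iterating this splitting, write $A = A_1 \oplus A_2 \oplus \cdots \oplus A_k$ where each $A_j$ is an irreducible tridiagonal doubly stochastic matrix. Two easy bookkeeping points: (i) because the off-block entries are zero, each $A_j$ inherits diagonal dominance from $A$ (the row sums of $|\text{off-diagonal}|$ in $A_j$ coincide with those in $A$ for the corresponding rows); and (ii) the spectrum of $A$ is the union of the spectra of the $A_j$, so $0$ is an eigenvalue of at least one block, say $A_{j_0}$.

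Then I would apply Proposition~\ref{PSD_PD} directly to $A_{j_0}$: being irreducible, diagonally dominant, and tridiagonal doubly stochastic, if its size were $\geq 3$ it would be nonsingular, contradicting $0 \in \operatorname{spec}(A_{j_0})$. Hence $A_{j_0}$ is $1\times 1$ or $2\times 2$. A $1\times 1$ tridiagonal doubly stochastic matrix is just $(1)$, with eigenvalue $1$, so $A_{j_0}$ must be $2\times 2$. Every $2\times 2$ tridiagonal doubly stochastic matrix has the form $\begin{pmatrix} a & 1-a \\ 1-a & a\end{pmatrix}$ with eigenvalues $1$ and $2a-1$, so demanding $0$ as an eigenvalue forces $a = 1/2$, yielding the claimed block.

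There is no real obstacle; the argument is essentially a careful application of Proposition~\ref{PSD_PD} twice, once globally and once to an irreducible component. The one point that deserves emphasis in the write-up is the sentence justifying that reducibility in this tridiagonal doubly stochastic setting actually delivers a direct sum decomposition into irreducible tridiagonal doubly stochastic blocks (rather than a mere block-triangular form in the sense of Definition~\ref{irred}), so that Proposition~\ref{PSD_PD} can be applied again to the block of interest.
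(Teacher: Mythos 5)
Your argument is correct and is essentially the one the paper intends: the corollary is stated as an immediate consequence of Proposition~\ref{PSD_PD} (no explicit proof is written out), and your two-step application of that proposition---first to rule out irreducibility of $A$, then to the irreducible direct summand carrying the zero eigenvalue---is exactly the natural fleshing-out of that claim. Your care in noting that reducibility here yields a genuine direct sum of diagonally dominant tridiagonal doubly stochastic blocks (via some $b_i=0$) is the right point to make explicit.
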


The following Corollary is a more general statement than our previous Proposition~\ref{PD_CP}. This result appears to be known (e.g.\ it is mentioned in \cite[Section 3]{Dahl}), yet we are unaware of a proof in the literature. Given some subtleties in, and the length of, the proof, we have provided the details herein, which culminate in the corollary below. We note that \cite[Example 2]{CPgraphs} states that all tridiagonal doubly stochastic matrices are completely positive, which is not true in general without the assumption of diagonal dominance.

\begin{corollary}\label{TridiagDS_CP=PSD}
Let $A$ be an $n\times n$  tridiagonal doubly stochastic matrix. If $A$ is diagonally dominant, then $A$ is completely positive.
\end{corollary}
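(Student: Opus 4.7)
The plan is to reduce to the irreducible case and then combine Proposition~\ref{PSD_PD} with the construction in the proof of Proposition~\ref{PD_CP}. First, I would observe that if $A$ is reducible, then after a permutation similarity (which preserves complete positivity, since reordering the basis vectors reorders the rows of any $V$ with $A=VV^T$) we may write $A = A_1 \oplus A_2 \oplus \cdots \oplus A_k$ as a direct sum of irreducible tridiagonal doubly stochastic matrices. The diagonal dominance condition $b_{i-1}+b_i \le 1/2$ is local, so each block $A_j$ inherits it. Complete positivity passes through direct sums (if $A_j = V_j V_j^T$ with $V_j$ entrywise non-negative, then $A = V V^T$ with $V = V_1 \oplus \cdots \oplus V_k$), so it suffices to handle the irreducible case.

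Next, I would dispatch the irreducible case by size. For $n=1$, $A=(1)$ is trivially completely positive. For $n=2$, $A = \begin{pmatrix} a & b \\ b & a \end{pmatrix}$ with $a+b=1$ and $a,b \ge 0$; diagonal dominance forces $b \le 1/2 \le a$, so $a^2 \ge b^2$, and the construction of Equation~\eqref{veqn} with $a_0=0$ yields non-negative vectors whose rank-one sum equals $A$. For $n \ge 3$, Proposition~\ref{PSD_PD} gives that $A$ is positive definite, hence by Sylvester's criterion every leading principal minor of $A$ is strictly positive. As shown in the proof of Proposition~\ref{PD_CP}, this is exactly the condition needed for the recursion in Equation~\eqref{veqn} with $a_0=0$ to produce well-defined vectors $v_0, v_1, \ldots, v_n$ with real entries; entrywise non-negativity of $A$ then guarantees these entries are non-negative. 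Proposition~\ref{tridiag_decomp} now yields $A = \sum_{i=0}^n v_i v_i^T$, so $A$ is completely positive.

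The main subtlety (rather than a genuine obstacle) is that the construction underlying Proposition~\ref{PD_CP} requires strict positivity of all leading principal minors, which can fail when $A$ is merely positive semidefinite and not positive definite. Corollary~\ref{cor:1/2} describes exactly how this failure occurs: only through a reducible block of the form $\begin{pmatrix} 1/2 & 1/2 \\ 1/2 & 1/2 \end{pmatrix}$. The reduction to irreducible blocks in the first step isolates and handles such degenerate pieces, after which diagonal dominance plus irreducibility plus $n \ge 3$ forces positive definiteness and lets the recursive decomposition go through unobstructed.
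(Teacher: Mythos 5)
Your proof is correct and follows essentially the same route as the paper: reduce to irreducible blocks via the direct-sum structure, treat the degenerate $2\times 2$ all-$\tfrac12$ block separately, and invoke Proposition~\ref{PSD_PD} to get positive definiteness so that the construction underlying Proposition~\ref{PD_CP} applies. If anything, your write-up is more explicit than the paper's, which stops after establishing positive definiteness and leaves the final appeal to Proposition~\ref{PD_CP} implicit.
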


\begin{proof}
From the discussion following Definition~\ref{irred}, we can assume that $A$ is irreducible.

Proposition~\ref{PSD_PD} in fact holds for $n\geq 2$ except for the special case of the $2\times 2$ matrix $A=\begin{pmatrix} 1/2 & 1/2\\ 1/2& 1/2\end{pmatrix}$. However, this matrix is  completely positive with $V=1/\sqrt{2}(1\,\, 1)^T$. In all other cases, it  follows from Proposition~\ref{PSD_PD} that $A$ is positive definite and hence $A$ is completely positive due to Theorem \ref{SylWW}.

\end{proof}

Proposition 3.2 of \cite{CP} states that the
cp-rank of a matrix $A$ (that is, the minimal number $k$ in Definition~\ref{def:cp})  is greater than or equal to the rank of $A$. 
We show that the cp-rank of any tridiagonal doubly stochastic matrix is the same as the rank, or equivalently, Equation~(\ref{veqn}) provides a way to construct a minimal rank-one decomposition.

\begin{corollary}\label{cor:cp-rank}
The cp-rank of any positive semi-definite irreducible tridiagonal doubly stochastic matrix is equal to its rank.
\end{corollary}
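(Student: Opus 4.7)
The plan is to sandwich the cp-rank between two equal bounds. For the lower bound, Proposition 3.2 of \cite{CP} (cited just before the corollary) immediately gives cp-rank$(A) \geq$ rank$(A)$. For the matching upper bound, I would produce a decomposition with exactly rank$(A)$ nonzero rank-one terms by running the construction in Equation~\eqref{veqn} with $a_0 = 0$ and carefully counting which $v_i$ survive.

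The first step is to show that this construction is well-defined for our $A$. Since $A$ is completely positive it is PSD, and by irreducibility $b_k > 0$ for every $k = 1, \ldots, n-1$. The key claim is that every leading principal minor $\det(A_k)$ with $k < n$ is strictly positive. Suppose instead $\det(A_k) = 0$ for some such $k$ and pick $0 \neq v \in \ker(A_k)$; extending $v$ by zeros to $\tilde v \in \mathbb R^n$ gives $\tilde v^{T} A \tilde v = v^{T} A_k v = 0$, and PSDness of $A$ then forces $A\tilde v = 0$. The $(k+1)$-th coordinate of this identity reads $b_k v_k = 0$, so $v_k = 0$; then the $k$-th coordinate reads $b_{k-1} v_{k-1} = 0$, and cascading upward through the positive $b_{k-1}, \ldots, b_1$ forces $v = 0$, a contradiction.

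The second step identifies precisely which $v_i$ are nonzero. A short induction based on the three-term determinantal recurrence $\det(A_k) = a_k \det(A_{k-1}) - b_{k-1}^2 \det(A_{k-2})$, with the convention $\det(A_0) := 1$, matched against Equation~\eqref{veqn}, yields the identity $(v_k)_k^2 = \det(A_k)/\det(A_{k-1})$. Positivity of $\det(A_1), \ldots, \det(A_{n-1})$ therefore guarantees $v_1, \ldots, v_{n-1}$ are all nonzero, while $v_n$ is nonzero precisely when $\det(A) > 0$. Since $A_{n-1}$ is positive definite and hence of rank $n-1$, the matrix $A$ has rank either $n-1$ or $n$. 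In the former case $(v_n)_n = 0$ forces $v_n = 0$ (its other entries vanish by construction of Equation~\eqref{veqn}), and the sum from Proposition~\ref{tridiag_decomp} collapses to $A = \sum_{i=1}^{n-1} v_i v_i^{T}$; in the latter case all of $v_1, \ldots, v_n$ contribute. Either way the number of nonzero rank-one summands equals rank$(A)$, yielding cp-rank$(A) \leq$ rank$(A)$ and completing the argument.

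I expect the main obstacle to be the cascading-zero step above: convincing oneself that irreducibility, PSD-ness, and the tridiagonal shape together forbid any intermediate leading principal minor from vanishing. The rest is essentially bookkeeping driven by Equation~\eqref{veqn} and the determinantal recurrence, and mirrors the minor-counting already used in the proof of Proposition~\ref{PD_CP}.
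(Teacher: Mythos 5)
Your proof is correct, and although it rests on the same two ingredients as the paper's proof---the lower bound $\mathrm{cp\text{-}rank}(A)\ge\mathrm{rank}(A)$ from Proposition~3.2 of \cite{CP}, and counting the nonzero summands in the decomposition of Equation~\eqref{veqn} with $a_0=0$---the way you justify the count is genuinely different and, in fact, more robust. The paper disposes of the counting step by asserting that an irreducible tridiagonal doubly stochastic matrix of size $n\ge 3$ can never be singular (because, it claims, the rows cannot be linearly dependent when all $b_i>0$), so that the rank is $n$ and all $n$ vectors $v_1,\dots,v_n$ survive. That assertion is false as stated: the matrix $\left(\begin{smallmatrix}0.6&0.4&0\\0.4&0.35&0.25\\0&0.25&0.75\end{smallmatrix}\right)$ is irreducible, tridiagonal, doubly stochastic, completely positive (being $3\times 3$ and doubly non-negative), and singular of rank $2$. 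Your argument does not need that claim: the kernel-cascading step (extend a null vector of a leading principal submatrix $A_k$ with $k<n$ by zeros, use $\tilde v^{T}A\tilde v=0$ together with positive semidefiniteness to conclude $A\tilde v=0$, then read off coordinates $k+1,k,k-1,\dots$ to force $v=0$) shows that only the full determinant can vanish, and the pivot identity $(v_k)_k^2=\det(A_k)/\det(A_{k-1})$ then shows that exactly the last vector $v_n$ drops out when it does, so the number of nonzero summands equals the rank in every case; on the example above your construction indeed returns $v_3=0$ and two nonzero vectors. In short, your route makes rigorous the minor-positivity argument that the proof of Proposition~\ref{PD_CP} only sketches, and it covers the singular case that the paper's own counting argument gets wrong.
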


\begin{proof}
Let $A$ be an $n\times n$ irreducible tridiagonal doubly stochastic matrix.
 We can let $a_0=0$ in the construction of the $v_i$  in Equation~\eqref{veqn}  as long as the matrix $A$ is positive definite by the proof of Proposition~\ref{PD_CP}. Therefore, the number of nonzero summands in Equation~(\ref{veqn}) is at most $n$.

An irreducible tridiagonal doubly stochastic matrix is singular if and only if it is the $2\times 2$ matrix with all entries the same (equal to $1/2$). Indeed, singular means that the rows are linearly dependent, but this is impossible for $n\geq 3$ since irreducibility of a  tridiagonal doubly stochastic matrix is  equivalent to $b_i>0$ for all $i=1, \dots, n$ (and thus the rank of such a matrix is $n$). As previously stated, in the case of the $2\times 2$ all-$1/2$ matrix (which has rank 1), our decomposition gives a $2\times 1$ matrix $V=1/\sqrt{2}(1\,\, 1)^T$ (equivalently, a single vector $v=1/\sqrt2(1\,\, 1)^T$). For $n\geq 3$, an irreducible tridiagonal doubly stochastic matrix has rank $n$, and our decomposition gives $n$ vectors $v_1, \dots, v_n$.
\end{proof}

\section{Symmetric pentadiagonal   matrices}\label{sec:penta}

Let $A$ be an $n\times n$ symmetric pentadiagonal  matrix:
$$A=\begin{pmatrix}a_1&b_1&c_1&&&& \\ b_1 & a_2 & b_2&c_2 &&&\\ c_1& b_2 & a_3 & b_3&c_3 &&\\ &\ddots&\ddots&\ddots&&& \\& &\ddots&\ddots&\ddots&\ddots& \\ &&c_{n-4}&b_{n-3}&a_{n-2}&b_{n-2}&c_{n-2} \\
&&&c_{n-3}&b_{n-2}&a_{n-1}&b_{n-1} \\&&&&c_{n-2}&b_{n-1}&a_{n} \end{pmatrix}. $$
We are interested again in the setting where $A$ is entrywise non-negative. If $A$ is also doubly stochastic, we have
 $a_{i}=1-(b_{i-1}+b_{i}+c_{i-2}+c_i)$ for $i=1,2,\ldots,n$, where $b_k=0$ for $k\leq 0$ or $k\geq n$ and $c_\ell=0$ for $\ell\leq 0$ or $\ell\geq n-1$.

 Unlike in the tridiagonal matrix setting, the property of being doubly stochastic does not immediately imply symmetry, and thus we assume as a hypothesis this extra condition.

\subsection{Basic Properties of symmetric pentadiagonal doubly stochastic matrices}
Many of the arguments from Section~\ref{sec:td_basic} carry through into the pentadiagonal setting.
As in the tridiagonal case, the eigenvalues of a symmetric pentadiagonal doubly stochastic matrix  are bounded between $-1$ and $1$.


Again, as in the case of tridiagonal doubly stochastic matrices (Proposition~\ref{prop:eigval}), any value in $[-1,1]$ can be realized as an eigenvalue of an $n\times n$ symmetric pentadiagonal doubly stochastic matrix. That is, the statement of Proposition~\ref{prop:eigval} reads the same when ``tridiagonal'' is replaced with ``symmetric pentadiagonal''. One can, if desired, use the bonafide pentadiagonal matrix\[A=\begin{pmatrix}a& b & b\\b & a & b\\b&b&a
\end{pmatrix}\] for the $n\geq 3$ cases.


\subsection{Complete Positivity}
We now provide a construction similar to that for tridiagonal doubly stochastic matrices, to provide a sufficient condition for when a symmetric pentadiagonal doubly stochastic matrix $A$ is completely positive. Define the set $\{v_i\}_{i=-1}^n$ of cardinality $n+2$, whose elements are $n$-dimensional vectors where the $j$-th component of $v_i$, denoted $(v_i)_j$ (where $j=1, \dots, n$), is recursively defined by
\begin{equation}\label{veqn_pent}
(v_i)_j= \begin{cases}
     \sqrt{a_i-[((v_{i-1})_i)^2+((v_{i-2})_i)^2]} & j=i \\
     \frac{b_i-(v_{i-1})_j(v_{i-1})_{j-1}}{(v_i)_i} & j=i+1 \\
     c_i/(v_i)_i & j=i+2 \\
     0  & otherwise
   \end{cases}
\end{equation}
  with initial conditions $v_{-1}=\begin{pmatrix} a_{-1}&0& \dots & 0\end{pmatrix}^T$ and $v_{0}=\begin{pmatrix} a_{0}&b_{0}&0& \dots & 0\end{pmatrix}^T$. This construction yields
   \begin{eqnarray*}
 v_{1}&=&\begin{pmatrix} \sqrt{a_1-(a_0^2+a_{-1}^2)}& \frac{b_1-b_0a_0}{\sqrt{a_1-(a_0^2+a_{-1}^2)}}& \frac{c_1}{\sqrt{a_1-(a_0^2+a_{-1}^2)}}&0& \dots & 0\end{pmatrix}^T\\
 v_2&=&\begin{pmatrix} 0 & \sqrt{a_2-\left(\frac{(b_1-b_0a_0)^2}{a_1-(a_0^2+a_{-1}^2)}+b_0^2\right)}& \frac{b_2-\frac{c_1(b_1-b_0a_0)}{a_1-(a_0^2+a_{-1}^2)}}{\sqrt{a_2-\left(\frac{(b_1-b_0a_0)^2}{a_1-(a_0^2+a_{-1}^2)}+b_0^2\right)}}& \frac{c_2}{\sqrt{a_2-\left(\frac{(b_1-b_0a_0)^2}{a_1-(a_0^2+a_{-1}^2)}+b_0^2\right)}}& 0 & \dots & 0\end{pmatrix}^T\\
  &\textnormal{ etc.}&
 \end{eqnarray*}
 Similar to the tridiagonal case, the constants $a_{-1},a_0,$ and $b_0$ are taken to be non-negative numbers with the caveat that there is some collection of initial values that leads to the decomposition being ill-defined. In fact, let $a_{-1}=b_0=0$ and $v_{-1}$  be the all-zeros vector, then the above construction reduces to the construction for tridiagonal matrices.
\begin{proposition}\label{penta_decomp}
Let $A$ be a symmetric pentadiagonal 
matrix. 
If the $v_i$ as defined in Equation~\eqref{veqn_pent} are well-defined, then $A=\sum_{i=-1}^nv_iv_i^T$. If the entries  for each $v_i$ are non-negative numbers, then $A$ is completely positive.
\end{proposition}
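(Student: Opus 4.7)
The plan is to mirror the proof of Proposition~\ref{tridiag_decomp}, replacing the tridiagonal support argument with a pentadiagonal one. Set $V_i = v_iv_i^T$ and $\tilde{A} = \sum_{i=-1}^n V_i$. From Equation~\eqref{veqn_pent}, each vector $v_i$ (for $i\geq 1$) is supported on the three consecutive indices $\{i, i+1, i+2\}$, while the initial vectors $v_{-1}$ and $v_0$ are supported on $\{1\}$ and $\{1,2\}$ respectively. Consequently each $V_i$ is a rank-one matrix supported inside a $3\times 3$ principal block, so $\tilde{A}$ is automatically pentadiagonal; all entries $\tilde{a}_{j,k}$ with $|j-k|\geq 3$ vanish. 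It then suffices to check agreement with $A$ on the main diagonal and the first two super-diagonals (symmetry takes care of the sub-diagonals).

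For the second super-diagonal, only $v_j$ has nonzero entries in both positions $j$ and $j+2$, so $\tilde{a}_{j,j+2} = (v_j)_j(v_j)_{j+2} = (v_j)_j \cdot \frac{c_j}{(v_j)_j} = c_j$. For the main diagonal, up to three vectors $v_{j-2}, v_{j-1}, v_j$ have a nonzero component at coordinate $j$, so
\[
\tilde{a}_{j,j} = ((v_{j-2})_j)^2 + ((v_{j-1})_j)^2 + ((v_j)_j)^2 = a_j,
\]
by design of the formula $(v_j)_j = \sqrt{a_j - [((v_{j-1})_j)^2 + ((v_{j-2})_j)^2]}$.

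The subtle step is the first super-diagonal, which is where the pentadiagonal case genuinely differs from the tridiagonal case. Two vectors, $v_{j-1}$ and $v_j$, contribute to position $(j,j+1)$. Using the recurrence for $(v_j)_{j+1}$, I would write
\[
\tilde{a}_{j,j+1} = (v_{j-1})_j(v_{j-1})_{j+1} + (v_j)_j(v_j)_{j+1} = (v_{j-1})_j(v_{j-1})_{j+1} + \bigl(b_j - (v_{j-1})_{j+1}(v_{j-1})_j\bigr) = b_j.
\]
The correction term $-(v_{j-1})_{j+1}(v_{j-1})_j$ built into the recurrence is precisely what cancels the leftover contribution of $V_{j-1}$ to the $(j,j+1)$ slot. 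This bookkeeping is the main obstacle; it forces $(v_j)_{j+1}$ to have a more intricate definition than in the tridiagonal setting.

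Finally, I would verify that the initial vectors $v_{-1}$ and $v_0$ slot into the same calculation at the top-left corner. Their contributions $a_{-1}^2$ and $a_0^2$ to position $(1,1)$, $a_0 b_0$ to position $(1,2)$, and $b_0^2$ to position $(2,2)$ are exactly the terms absorbed by the correction terms appearing inside $(v_1)_1$, $(v_1)_2$, and $(v_2)_2$, so the same identities continue to hold at the boundary. Once $\tilde{A}=A$ is established, the second assertion is immediate: if all entries of each $v_i$ are non-negative, the decomposition $A = \sum_{i=-1}^n v_iv_i^T$ is a completely positive decomposition by Definition~\ref{def:cp}.
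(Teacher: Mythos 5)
Your proposal is correct and follows essentially the same route as the paper's proof: form $V_i=v_iv_i^T$, observe that the support structure forces $\tilde{A}=\sum_i V_i$ to be pentadiagonal, and match the diagonal, first, and second super-diagonals entrywise, with the correction term in $(v_j)_{j+1}$ cancelling the contribution of $V_{j-1}$ exactly as the paper does. Your explicit check of the boundary contributions from $v_{-1}$ and $v_0$ is a slightly more careful treatment of a point the paper leaves implicit, but the argument is the same.
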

\begin{proof}
The proof is similar to the tridiagonal case. Consider a symmetric pentadiagonal   $n\times n$ matrix  $A$ such that the vectors in Equation~\eqref{veqn_pent} are well-defined. Let $V_i=v_iv_i^T$ for $i=-1,\dots,n$ and $\tilde{A}=\sum_{i=-1}^nV_i$. We wish to show that $\tilde{A}=A$. From the definition of the $v_i$ given in Equation~\eqref{veqn_pent}, each $V_i$ is symmetric and pentadiagonal with only up to nine nonzero entries and so $\tilde{A}$ itself is symmetric and pentadiagonal.

Now, consider a component $\tilde{a}_{j,j+1}$ of $\tilde{A}$, where $j=1,2,\dots,n-1$. The only $V_i$ that have a nonzero entry in the $(j,j+1)$-th component are $V_{j-1}$ and $V_j$  as $v_{j-1}$ and $v_j$ are the only vectors with both the $j$ and $(j+1)$-th components being nonzero. The $(j,j+1)$-th component of $V_{j-1}+V_j$ is $(v_{j-1})_j(v_{j-1})_{j+1}+(v_j)_j(v_j)_{j+1}$ which, after simplifying,  is in fact $b_j$ and so $\tilde{a}_{j,j+1}=b_j$. By symmetry, we also have $\tilde{a}_{j+1,j}=b_{j}$.

Now, consider a component $\tilde{a}_{j,j+2}$ of $\tilde{A}$, where $j=1,2,\dots,n-2$. The only $V_i$ that has  a nonzero entry in the $(j,j+2)$-th component is $V_{j}$ as $v_{j}$ and $v_{j+2}$ are the only vectors with both the $j$ and $(j+2)$-th components being nonzero. The value  of $(v_j)_j$ is the same as the denominator of $(v_j)_{j+2}$,  and so we simply obtain $\tilde{a}_{j,j+2}=c_j$. By symmetry, we also have $\tilde{a}_{j+2,j}=c_j$.

Now consider a component  on the diagonal of $\tilde{A}$:  $\tilde{a}_{jj}$, where $j=1,2,\dots,n$. The only $V_i$ that have nonzero entries in the $(j,j)$-th component are $V_{j-2}$, $V_{j-1}$, and  $V_j$, and the sum of the  respective values is precisely  $\tilde{a}_{jj}=a_j$ for $j=1,2,\dots,n$. Therefore $A=\tilde{A}=\sum_{i=-1}^nv_iv_i^T$; If the entries for each $v_i$ are non-negative numbers,
then $A$ is completely positive.
\end{proof}

When using Equation~\eqref{veqn_pent} to find a decomposition of a pentadiagonal matrix, it is  simplest to choose the initial vectors $v_{-1}$ and $v_0$ to both be the zero vector. However, Example~\ref{ex:nonzero_init_conds} shows that it is sometimes necessary to choose nonzero initial conditions in order to prove that the given matrix is completely positive.

\begin{example}\label{ex:nonzero_init_conds}
Consider the matrix
\[A=\begin{pmatrix}
 3/4 & 1/8 & 1/8 & 0 & 0 \\
 1/8 & 3/4 & 0 & 1/8 & 0 \\
 1/8 & 0 & 1/2 & 13/40 & 1/20 \\
 0 & 1/8 & 13/40 & 1/2 & 1/20 \\
 0 & 0 & 1/20 & 1/20 & 9/10 \\
\end{pmatrix}.\]
Using Equation~\eqref{veqn_pent} with initial vectors $v_{-1}$ and $v_0$ both taken to be the zero vector, we compute the matrix $V$ such that $A=VV^T$ to be
\[V=\begin{pmatrix}
0 & 0 & \frac{\sqrt{3}}{2} & 0 & 0 & 0 & 0 \\
 0 & 0 & \frac{1}{4 \sqrt{3}} & \frac{\sqrt{\frac{35}{3}}}{4} & 0 & 0 & 0 \\
 0 & 0 & \frac{1}{4 \sqrt{3}} & -\frac{1}{4 \sqrt{105}} & \frac{\sqrt{\frac{67}{35}}}{2} & 0 & 0 \\
 0 & 0 & 0 & \frac{\sqrt{\frac{3}{35}}}{2} & \frac{23}{\sqrt{2345}} & \frac{\sqrt{\frac{339}{335}}}{2} & 0 \\
 0 & 0 & 0 & 0 & \frac{\sqrt{\frac{7}{335}}}{2} & \frac{7 \sqrt{\frac{3}{37855}}}{2} & \sqrt{\frac{101}{113}} \\
\end{pmatrix}.\]
We note that the component $(v_2)_3$ is negative and hence this decomposition cannot be used to prove that $A$ is completely positive. 
It is not surprising that taking the initial conditions to be all zero does not work: if both $v_{-1}$ and $v_0$ are zero vectors, i.e.\ $a_{-1}=a_0=b_0=0,$ then $(v_2)_3>0$ is equivalent to $a_1b_2\geq b_1c_1.$ But in $A,$ $b_1=c_1=1/8$ while $b_2=0,$ so  $a_1b_2< b_1c_1.$

If we instead use the initial conditions $v_{-1}=\begin{pmatrix} 0&0& \dots & 0\end{pmatrix}^T$ and $v_{0}=\begin{pmatrix} \frac 1 2&\frac 1 4&0& \dots & 0\end{pmatrix}^T$, we obtain the decomposition
 $A=WW^T$, where \[W=\begin{pmatrix}
0 & \frac{1}{2} & \frac{1}{\sqrt{2}} & 0 & 0 & 0 & 0 \\
 0 & \frac{1}{4} & 0 & \frac{\sqrt{11}}{4} & 0 & 0 & 0 \\
 0 & 0 & \frac{1}{4 \sqrt{2}} & 0 & \frac{\sqrt{\frac{15}{2}}}{4} & 0 & 0 \\
 0 & 0 & 0 & \frac{1}{2 \sqrt{11}} & \frac{13}{5 \sqrt{30}} & \frac{\sqrt{\frac{4157}{165}}}{10} & 0 \\
 0 & 0 & 0 & 0 & \frac{\sqrt{\frac{2}{15}}}{5} & \frac{23 \sqrt{\frac{11}{62355}}}{10} & \frac{\sqrt{\frac{14861}{4157}}}{2} \\
\end{pmatrix}.\]
 This decomposition shows that $A$ is in fact completely positive.
\end{example}

\begin{example}\label{ex:pentaBlockDiag}
As an analogue to Example~\ref{triBlockEx} consider the matrix
\[\begin{pmatrix}
1/2 & 1/4 & 1/4 & 0 & 0 & 0 \\
 1/4 & 1/2 & 1/4 & 0 & 0 & 0 \\
 1/4 & 1/4 & 1/2 & 0 & 0 & 0 \\
 0 & 0 & 0 & 1/2 & 1/2 & 0 \\
 0 & 0 & 0 & 1/2 & 1/2 & 0 \\
 0 & 0 & 0 & 0 & 0 & 1 \\
\end{pmatrix}.\]
For this matrix, the construction we outline through Equation~\eqref{veqn_pent} never gives a well-defined decomposition regardless of the choice of initial conditions. To see why this is the case, note that $c_2=b_3=c_3=0$. Here we may assume that we have chosen initial conditions such that $(v_1)_1$, $(v_2)_2$, and $(v_3)_3$ are nonzero (otherwise the decomposition would already be ill-defined). From this we immediately obtain
\begin{eqnarray*}
\begin{tabular}{l}
$(v_2)_4=\frac{c_2}{(v_2)_2}=0$\\[0.5cm]
$(v_3)_4=\frac{b_3-(v_2)_4(v_2)_3}{(v_3)_3}=0$\\[0.5cm]
$(v_3)_5=\frac{c_3}{(v_3)_3}=0$.\\[0.5cm]
\end{tabular}
\end{eqnarray*}
Therefore we can compute the following components of $v_4$ to be:
\begin{eqnarray*}
\begin{tabular}{l}
$(v_4)_4=\sqrt{a_4-((v_{3})_4)^2+((v_{2})_4)^2}=\sqrt{a_4}=\frac{1}{\sqrt{2}}$\\[0.5cm]
$(v_4)_5=\frac{b_4-(v_3)_5(v_3)_4}{(v_4)_4}=\frac{b_4}{\sqrt{a_4}}=\frac{1}{\sqrt{2}}$\\[0.5cm]
$(v_4)_6=\frac{c_4}{(v_4)_4}=0$.\\[0.5cm]
\end{tabular}
\end{eqnarray*}
Now, all six of the components that have been calculated so far are completely independent of the initial conditions (except for the requirement that all previous components were well-defined). Therefore the vector $v_5$ is independent of the initial conditions. We then find that
\[(v_5)_5=\sqrt{a_5-(((v_4)_5)^2 + ((v_3)_5)^2)}=\sqrt{\frac{1}{2}-\left(\left(\frac{1}{\sqrt{2}}\right)^2+0\right)}=0.\]
Hence $(v_5)_6$ is not  well-defined.

Similar to Example~\ref{triBlockEx}, we can still make use of our construction to prove that $A$ is completely positive by considering $A$ as the block diagonal matrix
\[A=\begin{pmatrix}
A_1 & 0_{3,2} & 0_{3,1}\\
0_{2,3} & A_2 & 0_{2,1}\\
0_{1,3} & 0_{1,2} &A_3\\
\end{pmatrix}\]
where
\[A_1=\begin{pmatrix}
1/2 & 1/4 & 1/4 \\
 1/4 & 1/2 & 1/4 \\
 1/4 & 1/4 & 1/2\\
\end{pmatrix},\quad
A_2=\begin{pmatrix}
 1/2 & 1/2 \\
 1/2 & 1/2
\end{pmatrix},\quad
A_3=\begin{pmatrix}
1
\end{pmatrix}
\]
From here we can find a decomposition for the three matrices $A_1$, $A_2$, and $A_3$ separately. We find that
\[V_1=\begin{pmatrix}
 0 & 0 & \frac{1}{\sqrt{2}} & 0 & 0 \\
 0 & 0 & \frac{1}{2 \sqrt{2}} & \frac{\sqrt{\frac{3}{2}}}{2} & 0 \\
 0 & 0 & \frac{1}{2 \sqrt{2}} & \frac{1}{2 \sqrt{6}} & \frac{1}{\sqrt{3}} \\
\end{pmatrix}, \quad
V_2=\begin{pmatrix}
 0&\frac{1}{\sqrt{2}} & 0 \\
 0&\frac{1}{\sqrt{2}} & 0 \\
\end{pmatrix}, \quad
V_3=\begin{pmatrix}1\end{pmatrix}\]
where $A_1=V_1V_1^T$, $A_2=V_2V_2^T$, and $A_3=V_3V_3^T$. A decomposition for $A$ can then be formed  by creating the block diagonal matrix
\[V=\begin{pmatrix}
 V_1 & 0_{3,3} & 0_{3,1}\\
0_{2,5} & V_2 & 0_{2,1}\\
0_{1,5} & 0_{1,3} &V_3\\
\end{pmatrix}=
\begin{pmatrix}
 0 & 0 & \frac{1}{\sqrt{2}} & 0 & 0 &0&0&0&0\\
 0 & 0 &\frac{1}{2 \sqrt{2}} & \frac{\sqrt{\frac{3}{2}}}{2} &0& 0&0&0&0 \\
 0 & 0 & \frac{1}{2 \sqrt{2}} & \frac{1}{2 \sqrt{6}} & \frac{1}{\sqrt{3}}&0&0&0&0 \\
 0&0&0&0&0&0&\frac{1}{\sqrt{2}} & 0&0 \\
 0&0&0&0&0&0&\frac{1}{\sqrt{2}} & 0&0 \\
0&0&0&0&0&0&0&0&1\end{pmatrix}\]
and noting  that $A=VV^T$. This proves that  $A$ is completely positive.

Similar to Example~\ref{5x5}, we note that for any matrix $M$,  if   $M$ has columns consisting entirely of zeros these columns can be removed from the matrix $M$ without changing the value of $MM^T$. Therefore we can simplify $V$ to be the $6\times 5$ matrix below, rather than the $6\times 9$ matrix above;
\[V=\begin{pmatrix}
 \frac{1}{\sqrt{2}} & 0 & 0 &0&0\\
 \frac{1}{2 \sqrt{2}} & \frac{\sqrt{\frac{3}{2}}}{2} & 0&0&0 \\
 \frac{1}{2 \sqrt{2}} & \frac{1}{2 \sqrt{6}} & \frac{1}{\sqrt{3}}&0&0 \\
 0&0&0&\frac{1}{\sqrt{2}} & 0 \\
 0&0&0&\frac{1}{\sqrt{2}} &0 \\
0&0&0&0&1\end{pmatrix}\]
\end{example}

We leave a result analogous to Proposition~\ref{PD_CP} in the setting of symmetric pentadiagonal doubly stochastic matrices as an open problem. Example~\ref{ex:nonzero_init_conds} shows that there is a connection between elements in $A$ and how should one choose $v_{-1}$ and $v_0,$ however it is not immediately clear in general. Consider the matrix $A'$ which is equal to $A$ except for the following entries:
\begin{eqnarray*}
{a}'_{11}&=&a_1-(a_0^2+a_{-1}^2)\\
{a}'_{22}&=&a_2-b_0^2\\
{a}'_{21}&=&{a}'_{12}=b_1-b_0a_0.
\end{eqnarray*}
Note that $A={A}'$ provided that $a_0=a_{-1}=0$ and $b_0=0.$ If $A'$ is positive definite, then all of its leading principal minors are positive. However, this does not appear to be enough to conclude that $A$ is completely positive in this setting. Indeed, Equation~\eqref{veqn_pent} yields  $$(v_2)_3=\frac{b_2-\frac{c_1(b_1-b_0a_0)}{a_1-(a_0^2+a_{-1}^2)}}{\sqrt{a_2-\left(\frac{(b_1-b_0a_0)^2}{a_1-(a_0^2+a_{-1}^2)}+b_0^2\right)}},$$ and $(v_2)_3>0$ is equivalent to $$b_2-\frac{c_1(b_1-b_0a_0)}{a_1-(a_0^2+a_{-1}^2)}>0$$ (assuming the denominator of $(v_2)_3$ is well-defined). This expression is equivalent to requiring that the  $3\times 3$ leading principal submatrix of $A'$   with the last row and second last column removed, has positive  determinant.

In general, requiring that $(v_i)_{i+1}>0$, assuming the denominator is well-defined, is equivalent to requiring that the  $(i+1)\times (i+1)$ leading principal submatrix of $A'$   with the last row and second last column removed, has positive determinant.

\subsection{Alternate Construction}
As Example~\ref{ex:nonzero_init_conds} illustrates, there can be some trial and error when it comes to finding a decomposition with all components being positive. Selecting initial conditions that can achieve this may be difficult or even impossible in certain cases. One workaround to this is in the case where the given matrix is block diagonal, as in    Example~\ref{ex:pentaBlockDiag}.

Another technique one can use if decomposing $A$ directly as described by Equation~(\ref{veqn}) or (\ref{veqn_pent}) does not yield results, is described in this Section. It can be used when the given matrix is not necessarily block diagonal. The main idea is to find matrices $\tilde{A}$ and $\hat{A}$ such that $A=\tilde{A}+\hat{A}$ and then decompose $\tilde{A}$ and $\hat{A}$ using Equation~(\ref{veqn}) or (\ref{veqn_pent}).  If $\tilde{A}$ and $\hat{A}$ are completely positive with decompositions $\tilde{A}=VV^T$ and $\hat{A}=WW^T$, then $A$ has a decomposition given by the  matrix $\begin{pmatrix}V & W\end{pmatrix}$, which is simply the matrix constructed with the columns of $V$ followed by the columns of $W$. Below, we provide a  construction that gives $A$ as a sum of two specified positive semidefinite matrices $\tilde{A}$ and $\hat{A}$ that can often be convenient to consider, but in general there are other matrices that work.

Let $A$ be a $n\times n$ symmetric pentadiagonal doubly stochastic matrix. Recall the convention that $b_0=b_{n}=c_{-1}=c_{0}=c_{n-1}=c_{n}=0$. Define the $n\times n$ matrix $\tilde{A}$ to be the matrix with components $\tilde{a}_{ii}=\frac{1}{2}-b_i-b_{i-1}$ for $i\in\{1,\dots,n\}$, $\tilde{a}_{i,i+2}=\tilde{a}_{i+2,i}=c_i$, and all other components being zero.  We  similarly define the $n\times n$ matrix $\hat{A}$ to be the matrix with components $\hat{a}_{ii}=\frac{1}{2}-c_i-c_{i-2}$ for $i\in\{1,\dots,n\}$, $\hat{a}_{i,i+1}=\hat{a}_{i+1,i}=b_i$, and all other components being zero. 
We find that $\tilde{A}+\hat{A}=A$, as desired.

If $A$ is diagonally dominant, $\tilde{A}$ and $\hat{A}$ are diagonally dominant as well, and hence also positive semidefinite. 
As $\tilde{A}$ and $\hat{A}$ are much simpler than $A$,  finding decompositions for both $\tilde{A}$ and $\hat{A}$ with all positive components is often much simpler (if it is possible),  as the next example shows.

\begin{example}
Consider the matrix
\[A=\begin{pmatrix}
 7/12&1/3&1/12&0\\
 1/3&7/12&1/156&1/13\\
 1/12&1/156&7/12&17/52\\
 0&1/13&17/52&31/52
\end{pmatrix}.\]
Since the matrix has order $4$ and is doubly non-negative it must be completely positive by \cite{Berman88}. However, if we try to decompose $A$ using the all zero vectors as our initial conditions, we obtain
\[\begin{pmatrix}
  0 & 0 & \frac{\sqrt{\frac{7}{3}}}{2} & 0 & 0 & 0 \\
 0 & 0 & \frac{2}{\sqrt{21}} & \frac{\sqrt{\frac{11}{7}}}{2} & 0 & 0 \\
 0 & 0 & \frac{1}{2 \sqrt{21}} & -\frac{15}{26 \sqrt{77}} & \frac{\sqrt{\frac{4217}{11}}}{26} & 0 \\
 0 & 0 & 0 & \frac{2 \sqrt{\frac{7}{11}}}{13} & \frac{2491}{26 \sqrt{46387}} & 4 \sqrt{\frac{101}{4217}} \\
\end{pmatrix}.\]
Note the single negative entry. We can try using different initial conditions, but taking a guess-and-check approach is not an ideal strategy. Instead,  now consider the matrices $\tilde{A}$ and $\hat{A}$:
\[\tilde{A}=\begin{pmatrix}
 1/6 & 0 & 1/12 & 0 \\
 0 & 25/156 & 0 & 1/13 \\
 1/12 & 0 & 1/6 & 0 \\
 0 & 1/13 & 0 & 9/52 \\
\end{pmatrix},\quad
\hat{A}=\begin{pmatrix}
 5/12 & 1/3 & 0 & 0 \\
 1/3 & 11/26 & 1/156 & 0 \\
 0 & 1/156 & 5/12 & 17/52 \\
 0 & 0 & 17/52 & 11/26 \\
\end{pmatrix}\]
Decomposing both of these we obtain
\[V=\begin{pmatrix}
  0 & 0 & \frac{1}{\sqrt{6}} & 0 & 0 & 0 \\
 0 & 0 & 0 & \frac{5}{2 \sqrt{39}} & 0 & 0 \\
 0 & 0 & \frac{1}{2 \sqrt{6}} & 0 & \frac{1}{2 \sqrt{2}} & 0 \\
 0 & 0 & 0 & \frac{2 \sqrt{\frac{3}{13}}}{5} & 0 & \frac{\sqrt{\frac{177}{13}}}{10} \\
\end{pmatrix},\quad
W=\begin{pmatrix}
  0 & 0 & \frac{\sqrt{\frac{5}{3}}}{2} & 0 & 0 & 0 \\
 0 & 0 & \frac{2}{\sqrt{15}} & \sqrt{\frac{61}{390}} & 0 & 0 \\
 0 & 0 & 0 & \frac{\sqrt{\frac{5}{4758}}}{2} & \frac{5 \sqrt{\frac{317}{4758}}}{2} & 0 \\
 0 & 0 & 0 & 0 & \frac{17 \sqrt{\frac{183}{8242}}}{5} & \frac{2 \sqrt{\frac{4286}{4121}}}{5} \\
\end{pmatrix}\]
where $\tilde{A}=VV^T$ and $\hat{A}=WW^T$. One can check that $A=\begin{pmatrix}V & W\end{pmatrix}\begin{pmatrix}V & W\end{pmatrix}^T$, where we set $\begin{pmatrix}V & W\end{pmatrix}$ to be (we deleted unnecessary all-zero columns):
\begin{equation}
\begin{pmatrix}V & W\end{pmatrix}=\begin{pmatrix}
   \frac{1}{\sqrt{6}} & 0 & 0 & 0 & \frac{\sqrt{\frac{5}{3}}}{2} & 0 & 0 & 0 \\
  0 & \frac{5}{2 \sqrt{39}} & 0 & 0 & \frac{2}{\sqrt{15}} & \sqrt{\frac{61}{390}} & 0 & 0 \\
  \frac{1}{2 \sqrt{6}} & 0 & \frac{1}{2 \sqrt{2}} & 0&0 & \frac{\sqrt{\frac{5}{4758}}}{2} & \frac{5 \sqrt{\frac{317}{4758}}}{2} & 0 \\
  0 & \frac{2 \sqrt{\frac{3}{13}}}{5} & 0 & \frac{\sqrt{\frac{177}{13}}}{10} & 0 & 0 & \frac{17 \sqrt{\frac{183}{8242}}}{5} & \frac{2 \sqrt{\frac{4286}{4121}}}{5} \\
\end{pmatrix}
\end{equation}
This shows that $A$ is completely positive.
\end{example}
\section{Relation to Cholesky Decomposition}
 The Cholesky factorization $A=LL^T$ decomposes a positive semidefinite matrix $A$ into the product of a lower triangular matrix $L$ and its transpose. 
 
    The Cholesky decomposition for positive definite matrices is unique, as long as the diagonal entries are chosen positive. That is, although the Cholesky algorithm gives a unique factorization $LL^T$ of a positive definite matrix $A$,  where $L$ has positive diagonal, in general, all off-diagonal entries of $L$ are not necessarily nonnegative.  Our algorithm returns a lower triangular matrix as long as $a_0$ is chosen to be zero. Hence, if the $a_0=0$ case works successfully then the result must be precisely the Cholesky decomposition of the matrix (i.e., using the Cholesky algorithm will also  prove complete postivity). In other words, the Cholesky algorithm always gives a nonnegative lower triangular matrix $L$ for positive definite tridiagonal doubly stochastic matrices, which can be seen via our algorithm, and it is unique. 
    
    In the positive semidefinite case, a Cholesky decomposition still exists but is not unique. Our decomposition would be one of the many possible Cholesky factorizations. 
    
    A Cholesky factorization algorithm for  banded matrices is given in \cite[Section 4.3.5]{Golub}. The algorithm therein requires $n(p^2 + 3p)$ \emph{flops}, defined as additions/subtractions/division/multiplication within a matrix computation, and $n$ square roots, where $n$ is the dimension of the matrix and $p$ is the band length. In our setting, $p=1$ and  our algorithm (Equation (\ref{veqn})) requires $3n$ flop: one subtraction and one multiplication in $a_i - (v_{i-1})_i^2$, and one division in $b_i/(v_i)_i)$, for each vector $v_1, \dots, v_n$, and $n$ square roots: $\sqrt{a_i - (v_{i-1})_i^2}$ for each vector $v_1, \dots, v_n$, so it is more efficient, although we are not making claims to having the most efficient possible algorithm.

    If one removes the first two zero columns of the matrix $V$ given in Example~\ref{ex:nonzero_init_conds}, the resulting matrix   is the Cholesky factorization for the given matrix $A$. This example is quite notable then as it gives an example where the Cholesky factorization fails to prove complete positivity but the construction we provide does. This appears to be related  to the fact that the Cholesky factorization provides matrices of cp-rank $n$, whereas in our work, we may choose to look at decompositions with cp-rank $n+1$ for the tridiagonal case, or higher in the pentadiagonal case,  depending on if we set the initial variables $a_0$, $a_{-1}$, etc.\ to be non-zero.

    \section{Conflict of interest}
    The authors have no conflict of interest to report. 
    
    \section{Data availability}
    Data sharing is not applicable to this article as no datasets were generated or analysed during the current study.
    
   \section*{Acknowledgements}
 We thank the referee and editor for their useful comments. S.P.~is supported by NSERC Discovery Grant number 1174582, the Canada Foundation for Innovation (CFI) grant number 35711, and the Canada Research Chairs (CRC) Program grant number 231250. S.P.~would like to thank Rajesh Pereira for helpful discussions early in the project. 

\end{document}